\tikzstyle{vertex}=[circle, draw, inner sep=1pt, minimum size=8pt]
\newcommand{\vertex}{\node[vertex]}
\newcommand{\noi}{\noindent}
\newcommand{\N}{\mathbb{N}}
\newcommand{\C}{\mathcal{C}}
\newtheorem{theorem}{Theorem}[section]
\newtheorem{definition}[theorem]{Definition}
\newtheorem{lemma}[theorem]{Lemma}
\newtheorem{proposition}[theorem]{Proposition}
\newtheorem{corollary}[theorem]{Corollary}
\newtheorem{conjecture}{Conjecture}
\newtheorem{desc}{Colouring Description}
\title{\textbf{\sc On the Rainbow Neighbourhood Number of Set-Graphs}}
\author{Johan Kok$^\ast$, Sudev Naduvath$^\dagger$}
\affil{\small Centre for Studies in Discrete Mathematics\\ Vidya Academy of Science \& Technology \\Thalakkottukara, Thrissur, Kerala, India.\\ $^\ast${\tt kokkiek2@tshwane.gov.za}\\$^\dagger${\tt sudevnk@gmail.com}}
\date{}
\begin{document}
\maketitle

\begin{abstract}
\noi In this paper, we present results for the rainbow neighbourhood numbers of set-graphs. It is also shown that set-graphs are perfect graphs. The intuitive colouring dilemma in respect of the rainbow neighbourhood convention is clarified as well. Finally, the new notion of the maximax independence, maximum proper colouring of a graph and a new graph parameter called the $i$-max number of $G$ are introduced as a new research direction.
\end{abstract}

\noi\textbf{Keywords:}  Rainbow neighbourhood, rainbow neighbourhood number, set-graph, maximax independence, maximum proper colouring, $i$-max number.
\vspace{0.25cm}

\noi \textbf{Mathematics Subject Classification 2010:} 05C15, 05C38, 05C75, 05C85.


\section{Introduction}

For general notation and concepts in graphs and digraphs see \cite{BM1,FH,DBW}. Unless stated otherwise, all graphs we consider in this paper are simple, connected and finite graphs.

For a set of distinct colours $\mathcal{C}= \{c_1,c_2,c_3,\dots,c_\ell\}$ a vertex colouring of a graph $G$ is an assignment $\varphi:V(G) \mapsto \mathcal{C}$ . A vertex colouring is said to be a \textit{proper vertex colouring} of a graph $G$ if no two distinct adjacent vertices have the same colour. The cardinality of a minimum set of colours in a proper vertex colouring of $G$ is called the \textit{chromatic number} of $G$ and is denoted $\chi(G)$. We call such a colouring a \textit{chromatic colouring} of $G$. 

A \textit{minimum parameter colouring} of a graph $G$ is a proper colouring of $G$ which consists of the colours $c_i;\ 1\le i\le \ell$, with minimum subscripts $i$. Unless stated otherwise, we consider minimum parameter colouring throughout this paper. The set of vertices of $G$ having the colour $c_i$ is said to be the \textit{colour class} of $c_i$ in $G$ and is denoted by $\C_i$. The cardinality of the colour class $\C_i$ is said to be the weight of the colour $c_i$, denoted by $\theta(c_i)$. Note that $\sum\limits_{i=1}^{\ell}\theta(c_i)=\nu(G)$. 

Unless mentioned otherwise,  we colour the vertices of a graph $G$ in such a way that $\C_1=I_1$, the maximal independent set in $G$, $\C_2=I_2$, the maximal independent set in $G_1=G-\C_1$ and proceed like this until all vertices are coloured. This convention is called \textit{rainbow neighbourhood convention}(see \cite{KNJ,KN1}).

Unless mentioned otherwise, we shall colour a graph in accordance with the rainbow neighbourhood convention. 

Note that the closed neighbourhood $N[v]$ of a vertex $v \in V(G)$ which contains at least one coloured vertex of each colour in the chromatic colouring, is called a rainbow neighbourhood (see \cite{KNJ}). The number of vertices in $G$ which yield rainbow neighbourhoods,  denoted by $r_\chi(G)$, is called the \textit{rainbow neighbourhood number} of $G$. 

In \cite{KNB}, the bounds on $r_\chi(G)$ corresponding to of minimum proper colouring was defined as, $r^-_\chi(G)$ and $r^+_\chi(G)$ respectively denote the minimum value and maximum value of $r_\chi(G)$ over all permissible colour allocations. If we relax connectedness, it follows that the null graph $\mathfrak{N}_n$ of order $n\geq 1$ has $r^-(\mathfrak{N}_n)=r^+(\mathfrak{N}_n)=n$. For more results in this area, refer to \cite{KN2}. For bipartite graphs and complete graphs, $K_n$ it follows that, $r^-(G)=r^+(G)=n$ and $r^-(K_n)=r^+(K_n)=n$.

We observe that if it is possible to permit a chromatic colouring of any graph $G$ of order $n$ such that the star subgraph obtained from vertex $v$ as center and its open neighbourhood $N(v)$ the pendant vertices, has at least one coloured vertex from each colour for all $v \in V(G)$ then $r_\chi(G) = n$. Certainly to examine this property for any given graph is complex. More interesting results are presented in \cite{SNKK,NSKK}.

\begin{lemma}
{\rm \cite{KCSS}} For any graph $G$, the graph $G'=K_1 + G$ has $r_\chi(G')=1+r_\chi(G)$.
\end{lemma}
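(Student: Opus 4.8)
The plan is to fix a chromatic colouring of $G' = K_1 + G$ obtained from the rainbow neighbourhood convention, and to transfer rainbow neighbourhoods between $G'$ and $G$. Write $u$ for the single vertex of the $K_1$ factor, so that $u$ is adjacent to every vertex of $G$ in $G'$. First I would record the structural facts that drive the argument. Since $u$ is adjacent to all other vertices, in any proper colouring of $G'$ the colour on $u$ can appear on no other vertex, whence $\chi(G') = 1 + \chi(G)$. Next I would argue that the convention colours $u$ last: at every stage the uncoloured part of $G'$ has the form $\{u\}\cup H$ for some induced subgraph $H$ of $G$, and because $u$ is adjacent to all of $H$, the set $\{u\}$ is a maximal independent set only when $H = \varnothing$. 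Consequently, while $H \neq \varnothing$ the maximal independent sets of $\{u\}\cup H$ are exactly those of $H$, so the convention on $G'$ restricts on $V(G)$ to the convention colouring of $G$ with colours $c_1,\dots,c_{\chi(G)}$, and $u$ receives the extra colour $c_{\chi(G)+1}$ as the final class.

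With the colouring pinned down, I would count the rainbow neighbourhoods of $G'$ by separating $u$ from the vertices of $G$. For $u$ itself, $N_{G'}[u] = V(G')$ contains a vertex of every one of the $\chi(G')$ colours simply because the colouring is chromatic; hence $u$ always yields a rainbow neighbourhood, contributing exactly $1$. For a vertex $v \in V(G)$ the closed neighbourhood in $G'$ is $N_{G'}[v] = N_G[v]\cup\{u\}$, and since $u$ is the unique bearer of the colour $c_{\chi(G)+1}$, this colour is automatically present; thus $v$ sees all $\chi(G')$ colours in $G'$ if and only if $N_G[v]$ already sees all of $c_1,\dots,c_{\chi(G)}$ in $G$.

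The core of the proof is therefore the equivalence that $v$ yields a rainbow neighbourhood in $G'$ exactly when it yields one in $G$, which is immediate from the previous paragraph together with the restriction statement, because the colour of $v$ and of each of its $G$-neighbours is unchanged when passing from the convention colouring of $G$ to that of $G'$. Summing the two contributions, the vertices of $V(G)$ account for precisely $r_\chi(G)$ rainbow neighbourhoods and $u$ accounts for $1$, giving $r_\chi(G') = 1 + r_\chi(G)$.

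The step I expect to require the most care is the claim that the convention colouring of $G'$ restricts to the convention colouring of $G$ and forces $u$ into the last colour class, since $r_\chi$ is defined relative to a specific colouring rather than an arbitrary chromatic one. The observation that makes this routine, and that I would state explicitly, is that $\{u\}$ is the only independent set of $G'$ containing $u$, so that $u$ is never selected into an early class and the greedy maximal-independent-set procedure on $G'$ commutes with deletion of $u$; this removes any worry about ties among equal-sized maximal independent sets.
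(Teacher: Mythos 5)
The paper does not prove this lemma; it is quoted verbatim from \cite{KCSS}, so there is no in-paper argument to compare against. Judged on its own, your proof is essentially correct and is the natural one: $u$ is adjacent to everything, hence sits alone in its colour class, $\chi(G')=\chi(G)+1$, $u$ trivially yields a rainbow neighbourhood, and each $v\in V(G)$ yields one in $G'$ iff it does in $G$ because $u$ supplies the one extra colour to every closed neighbourhood. One small inaccuracy: your claim that ``$\{u\}$ is a maximal independent set only when $H=\varnothing$'' is backwards --- since $u$ is adjacent to all of $H$, the singleton $\{u\}$ is \emph{always} inclusion-maximal in $\{u\}\cup H$ (and is even a maximum independent set when $\alpha(H)\le 1$), so under the convention $u$ could in principle be selected before the last step. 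This does not damage the proof, because whatever step selects $u$ produces the singleton class $\{u\}$ and leaves the procedure on the remaining vertices identical to the procedure on the corresponding subgraph of $G$; the restriction of the colouring to $V(G)$ is still a convention colouring of $G$, which is all your counting argument uses. You would do better to drop the ``coloured last'' claim entirely and argue directly that deletion of $u$ commutes with the greedy procedure, as you in fact note at the end.
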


\section{Rainbow Neighbourhood Number of Set-graphs}

\noi This section begins with an important lemma from \cite{KNB}.

\begin{lemma}\label{Lem-2.1} 
{\rm \cite{KNB}} If a vertex $v \in V(G)$ yields a rainbow neighbourhood in $G$ then, $d_G(v) \geq \chi(G)-1$.
\end{lemma}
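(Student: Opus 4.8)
The plan is to argue by a direct counting (pigeonhole) argument on the colours that appear in the closed neighbourhood $N[v]$. By definition, $v$ yields a rainbow neighbourhood precisely when $N[v]$ contains at least one vertex from each of the $\chi(G)$ colour classes of a chromatic colouring of $G$. Since every vertex of $G$ receives exactly one colour under a proper colouring, the number of distinct colours occurring among the vertices of $N[v]$ cannot exceed $|N[v]|$.

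First I would record that $N[v] = \{v\} \cup N(v)$ and hence $|N[v]| = 1 + d_G(v)$, because $|N(v)| = d_G(v)$ in a simple graph. Next, because $N[v]$ must realise all $\chi(G)$ colours and each vertex carries a single colour, at least $\chi(G)$ vertices are required to accommodate $\chi(G)$ distinct colours; that is, $|N[v]| \geq \chi(G)$. Combining these two facts gives $1 + d_G(v) \geq \chi(G)$, which rearranges to the claimed bound $d_G(v) \geq \chi(G) - 1$.

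There is no serious obstacle here; the only point warranting care is the bookkeeping of whether $v$ itself is counted. Since a rainbow neighbourhood is defined via the \emph{closed} neighbourhood, the colour assigned to $v$ is one of the colours that may be supplied, so the ``$+1$'' in $|N[v]| = 1 + d_G(v)$ is exactly what accounts for the ``$-1$'' in the final inequality. In particular the bound is tight: a vertex of degree exactly $\chi(G)-1$ can still yield a rainbow neighbourhood, provided $v$ together with its $d_G(v)$ neighbours carries all $\chi(G)$ colours without repetition. I would close by noting that the contrapositive is the form most useful in applications: any vertex with $d_G(v) < \chi(G)-1$ fails to yield a rainbow neighbourhood and may therefore be excluded when estimating $r_\chi(G)$.
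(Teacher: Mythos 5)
Your argument is correct and is the standard pigeonhole count: the paper itself cites this lemma from an external source without reproducing a proof, and your reasoning ($|N[v]| = 1 + d_G(v)$ must be at least the number $\chi(G)$ of colours it is required to exhibit) is exactly the intended one. No gaps; the remark on tightness and the contrapositive is a nice bonus but not needed.
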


\noi Lemma \ref{Lem-2.1} motivated the next probability corollary.

\begin{corollary}\label{Cor-2.2}
{\rm \cite{KNB}} A vertex $v\in V(G)$ possibly yields a rainbow neighbourhood in $G$ if and only if, $d_G(v) \geq \chi(G)-1$.
\end{corollary}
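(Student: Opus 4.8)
Since the corollary merely repackages Lemma~\ref{Lem-2.1} as a biconditional, the plan is to prove the two implications separately, reading ``$v$ possibly yields a rainbow neighbourhood'' as ``there exists a chromatic colouring of $G$ under which $N[v]$ contains a vertex of every one of the $\chi(G)$ colours.'' For the forward implication I would simply invoke Lemma~\ref{Lem-2.1}: fix a chromatic colouring that witnesses the possibility; under it $N[v]$ meets all $\chi(G)$ colour classes, so the lemma applies verbatim and yields $d_G(v)\geq\chi(G)-1$. Equivalently, by contraposition, if $d_G(v)<\chi(G)-1$ then $|N[v]|=d_G(v)+1<\chi(G)$, so no colouring can place all $\chi(G)$ colours inside $N[v]$ and $v$ cannot yield a rainbow neighbourhood under any colouring.

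For the backward implication, write $k=\chi(G)$ and assume $d_G(v)\geq k-1$; the task is to produce one chromatic colouring realising a rainbow neighbourhood at $v$. First I would colour $v$ with $c_1$, which bars $c_1$ from every neighbour and leaves exactly the $k-1$ colours $c_2,\dots,c_k$ to be realised on $N(v)$. Because $|N(v)|=d_G(v)\geq k-1$, there are at least as many neighbours as colours needed, so no counting obstruction exists; I would then select $k-1$ neighbours, attempt to assign them the distinct colours $c_2,\dots,c_k$, and extend the colouring to the remaining vertices of $G$ in the rainbow neighbourhood order. If this extends to a proper $k$-colouring, then $N[v]$ meets every colour class and $v$ yields a rainbow neighbourhood.

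The main obstacle is exactly this extension step in the backward direction. The degree bound removes only the numerical obstruction; it does not prevent adjacencies among the neighbours of $v$, or between those neighbours and the rest of $G$, from forcing two of the chosen neighbours to share a colour and thereby blocking the appearance of some $c_j$ inside $N[v]$. I therefore expect the delicate point to be justifying that the prescribed assignment on the selected neighbours can always be completed, and I would make the scope of the word ``possibly'' explicit: the degree condition guarantees that the colour-counting barrier is absent, which is the content secured by Lemma~\ref{Lem-2.1}, while the existence of a globally consistent extension is what the ``possibility'' language is asserting.
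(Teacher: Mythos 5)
The paper offers no proof of this corollary: it is quoted from \cite{KNB} and is explicitly introduced as a ``probability corollary'' motivated by Lemma~\ref{Lem-2.1}, so the only mathematical content intended is the necessity direction. That half of your proposal is correct and matches the paper's implicit reasoning exactly: if $d_G(v)<\chi(G)-1$ then $|N[v]|=d_G(v)+1<\chi(G)$, so $N[v]$ cannot meet every colour class under any proper colouring with $\chi(G)$ colours.

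The backward implication, under the existential reading you adopt (``there exists a chromatic colouring realising a rainbow neighbourhood at $v$''), is where the proposal breaks down, and the gap you flag in your extension step is not repairable, because that reading of the statement is false. Take $G$ on vertices $\{v,x,y,a,b\}$ with edge set $\{ab,xa,xb,ya,yb,vx,vy\}$, i.e.\ $K_4$ minus the edge $xy$ together with a vertex $v$ joined to $x$ and $y$. Then $\chi(G)=3$ and $d_G(v)=2=\chi(G)-1$, but in every proper $3$-colouring both $x$ and $y$ must receive the unique colour absent from $\{a,b\}$, so $N[v]=\{v,x,y\}$ carries only two colours and $v$ yields a rainbow neighbourhood under no chromatic colouring whatsoever. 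So the degree bound removes only the counting obstruction, exactly as you suspected; no argument can upgrade it to sufficiency. The word ``possibly'' must therefore be read weakly: the condition $d_G(v)\ge\chi(G)-1$ is necessary for $v$ to yield a rainbow neighbourhood and merely leaves open the possibility that it does. Your forward direction is the entire proof; the second half of your proposal should be deleted rather than completed.
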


\noi The notion of a set-graph was introduced in \cite{KCSS}  as explained below.

\begin{definition}{\rm 
\cite{KCSS}  Let $A^{(n)} = \{a_1,a_2,a_3,\dots , a_n\}$, $ n \in \N$ be a non-empty set and the $i$-th $s$-element subset of $A^{(n)}$ be denoted by $A^{(n)}_{s,i}$. Now, consider $\mathcal S = \{A^{(n)}_{s,i}: A^{(n)}_{s,i} \subseteq A^{(n)}, A^{(n)}_{s,i} \neq \emptyset \}$. The \textit{set-graph} corresponding to set $A^{(n)}$, denoted $G_{A^{(n)}}$, is defined to be the graph with $V(G_{A^{(n)}}) = \{v_{s,i}: A^{(n)}_{s,i} \in \mathcal S\}$ and $E(G_{A^{(n)}}) = \{v_{s,i}v_{t,j}: A^{(n)}_{s,i} \cap A^{(n)}_{t,j} \neq \emptyset\}$, where $s\neq t$ or $i\neq j$.
}\end{definition}

Figure \ref{fig:Fig-1SetGraph} depicts the set-graph $G_{A^{(3)}}$.

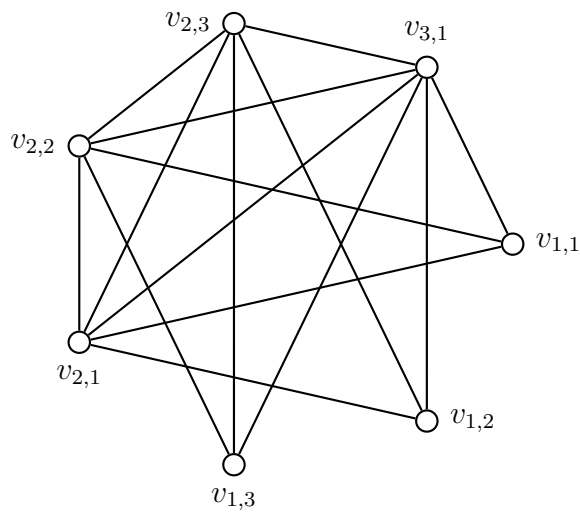
\begin{figure}[h!]
\centering
\begin{tikzpicture}[auto,node distance=1.75cm,
thick,main node/.style={circle,draw,font=\sffamily\Large\bfseries}]
\vertex (v1) at (0:3) [label=right:$v_{1,1}$]{};
\vertex (v2) at (308.57:3) [label=right:$v_{1,2}$]{};
\vertex (v3) at (257.14:3) [label=below:$v_{1,3}$]{};
\vertex (v4) at (205.71:3) [label=below:$v_{2,1}$]{};
\vertex (v5) at (154.28:3) [label=left:$v_{2,2}$]{};
\vertex (v6) at (102.85:3) [label=left:$v_{2,3}$]{};
\vertex (v7) at (51.42:3) [label=above:$v_{3,1}$]{};
\path 
(v1) edge (v4)
(v1) edge (v5)
(v1) edge (v7)
(v2) edge (v4)
(v2) edge (v6)
(v2) edge (v7)
(v3) edge (v5)
(v3) edge (v6)
(v3) edge (v7)
(v4) edge (v5)
(v4) edge (v6)
(v4) edge (v7)
(v5) edge (v6)
(v5) edge (v7)
(v6) edge (v7)
;
\end{tikzpicture}
\caption{\small The set-graph $G_{A^{(3)}}$.}\label{fig:Fig-1SetGraph}
\end{figure}

The largest complete graph in the given set-graph $G_{A^{(n)}}$, $n \geq 2$  is $K_{2^{n-1}}$ and the number of such largest complete graphs in the given set-graph $G_{A^{(n)}}$, $n \geq2$ is provided in the following proposition.

\begin{proposition}\label{Prop-2.1}
{\rm \cite{KCSS}} The set-graph $G_{A^{(n)}}, n\geq 1$ has exactly $2^{n-1}$ largest complete graphs $K_{2^{n-1}}$.
\end{proposition}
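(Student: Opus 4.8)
The plan is to recast the whole question as a counting problem about set systems. Identifying each vertex $v_{s,i}$ with its subset $A^{(n)}_{s,i}$, two vertices are adjacent exactly when the corresponding subsets meet, so a complete subgraph of $G_{A^{(n)}}$ is nothing but an \emph{intersecting family} of non-empty subsets of $A^{(n)}$ (a collection that pairwise intersects), and a largest complete graph is a maximum such family. The text already records that this maximum size is $2^{n-1}$; I would re-derive it in the form I need by pairing every subset $S$ with its complement $A^{(n)}\setminus S$. This partitions the $2^n$ subsets into $2^{n-1}$ complementary pairs, and since $S$ and $A^{(n)}\setminus S$ are disjoint, an intersecting family meets each pair in at most one set. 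Hence a maximum family has exactly $2^{n-1}$ members and selects \emph{precisely one} set from every complementary pair; from the pair $\{\emptyset,A^{(n)}\}$ it must take $A^{(n)}$, since $\emptyset$ is not a vertex.

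Next I would prove a rigidity lemma: every maximum family $\mathcal F$ is upward closed. Indeed, if $S\in\mathcal F$ and $S\subseteq T$ but $T\notin\mathcal F$, then by the one-per-pair property $A^{(n)}\setminus T\in\mathcal F$; yet $S\cap(A^{(n)}\setminus T)=\emptyset$, contradicting that $\mathcal F$ is intersecting. Consequently the largest complete graphs are exactly the upward-closed families that pick one member from each complementary pair, and counting these is the whole task.

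To count, I would induct on $n$ and split $\mathcal F$ according to its minimal sets. Two distinct singletons are disjoint, so $\mathcal F$ contains at most one singleton. If $\{a_k\}\in\mathcal F$, upward closure forces all $2^{n-1}$ supersets of $\{a_k\}$ into $\mathcal F$, which then \emph{is} the principal family through $a_k$; this yields exactly $n$ maximum cliques. If $\mathcal F$ contains no singleton, then every singleton is excluded and hence every $(n-1)$-subset is forced in, after which only the intermediate layers remain to be chosen. The base cases $n=1,2,3$ can be checked directly and give $1,2,4$, in agreement with $2^{n-1}$.

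The hard part will be the enumeration in the singleton-free case, and this is where I expect the genuine difficulty to sit. One must show that, once $A^{(n)}$ and all $(n-1)$-sets are fixed, the number of admissible intersecting completions through the middle layers is exactly $2^{n-1}-n$, so that the total comes to $2^{n-1}$. This is delicate: the admissible completions are precisely the monotone self-dual selections among the middle-layer subsets, and the crux on which the proposition turns is to bound their number tightly — rather than by the crude $2^{m}$, where $m$ counts the complementary pairs among the middle-layer subsets. Establishing that this tight count is $2^{n-1}-n$ for every $n$, and not more, is the step I would scrutinise most carefully.
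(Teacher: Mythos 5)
Your reduction of the problem to counting maximum intersecting families of non-empty subsets of $A^{(n)}$ is sound, and so are the structural facts you establish along the way: the complementary-pair argument giving the clique size $2^{n-1}$, the upward-closure (rigidity) lemma, the observation that a maximum family contains at most one singleton, and the fact that a singleton forces the family to be the star through that element, yielding exactly $n$ such cliques. For what it is worth, there is nothing in the paper to compare this against: Proposition \ref{Prop-2.1} is imported from \cite{KCSS} without proof.

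The gap is the step you yourself flag as the crux, and it is not merely unproven --- it is false. Take $n=4$. A singleton-free maximum family must contain all four $3$-subsets and $A^{(4)}$ itself, and must then choose one $2$-set from each of the three complementary pairs $\{a_1,a_2\}/\{a_3,a_4\}$, $\{a_1,a_3\}/\{a_2,a_4\}$, $\{a_1,a_4\}/\{a_2,a_3\}$. Two $2$-subsets of a $4$-set are disjoint only if they are complementary, and every $2$-set meets every $3$-set, so \emph{all} $2^3=8$ choices yield valid upward-closed intersecting families of size $8$; your claimed count $2^{n-1}-n=4$ is off by a factor of two. Together with the $4$ stars this gives $12$ maximum cliques in $G_{A^{(4)}}$, not $2^{3}=8$. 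Indeed, by your own (correct) rigidity lemma the largest complete subgraphs of $G_{A^{(n)}}$ are exactly the maximal intersecting families on an $n$-set, equivalently the self-dual monotone Boolean functions, whose count $1,2,4,12,81,\dots$ agrees with $2^{n-1}$ only for $n\le 3$. So the programme cannot be completed as stated; carried out honestly it refutes the proposition for every $n\ge 4$, and the base-case check you stopped at $n=3$ should have been pushed one step further to expose this.
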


\begin{theorem} \label{Thm-2.2}
{\rm \cite{KCSS}} The chromatic number of a set-graph $G_{A^{(n)}}$, $n \geq 1$  is $\chi(G_{A^{(n)}}) = 2^{n-1}$.
\end{theorem}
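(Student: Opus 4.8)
The plan is to sandwich $\chi(G_{A^{(n)}})$ between a clique lower bound and an explicit-colouring upper bound, showing both equal $2^{n-1}$.

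For the lower bound I would invoke Proposition~\ref{Prop-2.1}: the set-graph has $K_{2^{n-1}}$ as its largest complete subgraph, so its clique number is $2^{n-1}$. Concretely, the $2^{n-1}$ vertices whose subsets all contain a fixed element, say $a_1$, are pairwise intersecting and hence mutually adjacent, exhibiting such a clique. Since every proper colouring assigns distinct colours to the vertices of any clique, this gives $\chi(G_{A^{(n)}}) \geq 2^{n-1}$.

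For the matching upper bound I would exhibit a proper colouring using exactly $2^{n-1}$ colours. The key observation is that two distinct vertices $v_{s,i}$ and $v_{t,j}$ are \emph{non}-adjacent precisely when $A^{(n)}_{s,i} \cap A^{(n)}_{t,j} = \emptyset$; in particular, any nonempty proper subset $S \subsetneq A^{(n)}$ and its complement $A^{(n)} \setminus S$ are disjoint and both nonempty, so their vertices are non-adjacent and may share a colour. I would therefore pair each nonempty proper subset with its complement. This partitions the $2^{n}-2$ nonempty proper subsets into $2^{n-1}-1$ complementary pairs, and I assign one colour to each pair. The only remaining vertex corresponds to the full set $A^{(n)}$, whose complement $\emptyset$ is not a vertex; it receives one additional colour of its own. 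Every colour class then consists either of two disjoint subsets or of a single subset, hence is an independent set, so the colouring is proper and uses $(2^{n-1}-1)+1 = 2^{n-1}$ colours, yielding $\chi(G_{A^{(n)}}) \leq 2^{n-1}$.

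Combining the two bounds gives $\chi(G_{A^{(n)}}) = 2^{n-1}$. I expect no serious obstacle; the one point needing care is that the complementary pairing is well defined and exhaustive, i.e.\ that $S \neq A^{(n)} \setminus S$ for every nonempty proper $S$ (which holds because two disjoint nonempty sets cannot coincide), so the $2^{n}-2$ proper subsets split cleanly into $2^{n-1}-1$ two-element classes with the full set left over as a singleton class. A quick sanity check on $n=3$, where the three complementary pairs $\{\text{singleton},\,2\text{-subset}\}$ each form one colour class and $A^{(3)}$ forms a fourth, confirms the count $2^{3-1}=4$.
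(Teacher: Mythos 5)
Your argument is correct and complete. Note that the paper itself offers no proof of this theorem --- it is imported from \cite{KCSS} as a known result --- so there is no in-paper argument to compare against; your sandwich is the natural self-contained one. Both halves check out: the $2^{n-1}$ subsets containing a fixed element $a_1$ pairwise intersect and give the clique lower bound (consistent with Proposition~\ref{Prop-2.1}), and the complementary pairing of the $2^n-2$ nonempty proper subsets into $2^{n-1}-1$ independent pairs, plus a singleton class for $A^{(n)}$, gives a proper $2^{n-1}$-colouring. Your care over $S \neq A^{(n)}\setminus S$ and the degenerate case $n=1$ (zero pairs, one colour) is exactly the right bookkeeping.
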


\noi The next important theorem is found to hold.

\begin{theorem}\label{Thm-2.3}
A set-graph $G_{A^{(n)}}$, $n \geq 1$  is a perfect graph.
\end{theorem}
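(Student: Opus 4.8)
The plan is to invoke the Strong Perfect Graph Theorem of Chudnovsky, Robertson, Seymour and Thomas, which asserts that a graph is perfect if and only if it contains neither an odd hole (an induced cycle $C_{2k+1}$ with $k \geq 2$) nor an odd antihole (the complement of such a cycle). Thus it suffices to show that no induced subgraph of $G_{A^{(n)}}$ is isomorphic to an odd hole or an odd antihole. A complementary, more self-contained route is to verify the definition of perfection directly, namely to prove $\chi(H) = \omega(H)$ for every induced subgraph $H$ of $G_{A^{(n)}}$. The case $H = G_{A^{(n)}}$ is already settled: Theorem~\ref{Thm-2.2} gives $\chi(G_{A^{(n)}}) = 2^{n-1}$, and since the largest complete subgraph is $K_{2^{n-1}}$ we have $\omega(G_{A^{(n)}}) = 2^{n-1}$ as well, so $\chi = \omega$ holds on the whole graph.

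The structural starting point I would exploit is the natural clique cover of $G_{A^{(n)}}$. For each element $a_k \in A^{(n)}$, let $Q_k$ denote the set of all vertices whose associated subset contains $a_k$. Any two such subsets meet in $a_k$, so each $Q_k$ induces a complete subgraph, and every edge $v_{s,i}v_{t,j}$ lies in some $Q_k$, namely for any $a_k \in A^{(n)}_{s,i} \cap A^{(n)}_{t,j}$. Hence $\{Q_1,\dots,Q_n\}$ covers $E(G_{A^{(n)}})$ by $n$ cliques, and the vertex of a subset $S$ lies in exactly $|S|$ of them. I would use this coordinate description to translate cliques and colourings of an induced subgraph $H$ into statements about how the surviving subsets distribute among the $n$ coordinate cliques, and then try to bound $\chi(H)$ from above by $\omega(H)$ through an assignment argument on these coordinates.

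The main obstacle, and the step I would attack first, is ruling out odd holes. An induced cycle corresponds to a cyclic family of subsets $S_1,\dots,S_m$ in which consecutive subsets intersect while all non-consecutive ones are disjoint. The dangerous case is a short cycle built from two-element subsets placed around a cycle on distinct elements, i.e.\ $S_r = \{a_r, a_{r+1}\}$ with indices taken modulo $m$; for odd $m \geq 5$ one must decide whether such a family can occur as an induced subgraph among the available subsets. I would therefore devote the bulk of the argument to analysing these cyclic intersection patterns, because the dichotomy is sharp: if any odd-length pattern survives as a chordless induced cycle then perfection fails, whereas if every such pattern is forced to carry a chord (or provably fails to embed), the SPGT hypothesis is met. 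I expect this odd-hole analysis, rather than the antihole case or the $\chi=\omega$ bookkeeping, to be the crux on which the theorem turns.
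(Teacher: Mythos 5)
Your plan correctly isolates the decisive question, but it stops exactly where the proof has to be done, and when you carry that step out the answer goes the other way: the ``dangerous case'' you describe actually occurs. For $n\geq 5$ the five vertices corresponding to the subsets $\{a_1,a_2\}$, $\{a_2,a_3\}$, $\{a_3,a_4\}$, $\{a_4,a_5\}$, $\{a_5,a_1\}$ induce a chordless $C_5$ in $G_{A^{(n)}}$: consecutive pairs meet in a single element and all non-consecutive pairs are disjoint. This induced subgraph $H\cong C_5$ has $\omega(H)=2$ and $\chi(H)=3$, so $G_{A^{(n)}}$ is not perfect for any $n\geq 5$, and no completion of your argument (or any other) can exist in that range. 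Your own coordinate cliques $Q_k$ explain why small $n$ behaves differently: restricted to an induced cycle each $Q_k$ is a clique of that cycle, hence covers at most one edge, so an induced $C_m$ forces $m$ distinct elements of $A^{(n)}$; a similar count for odd antiholes shows $G_{A^{(n)}}$ is genuinely perfect for $n\leq 4$. The honest outcome of your approach is therefore a counterexample for $n\geq 5$ together with a proof for $n\leq 4$, not a proof of the theorem as stated.

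For comparison, the paper's proof takes an entirely different route: it notes $\omega(G_{A^{(n)}})=\chi(G_{A^{(n)}})=2^{n-1}$ (weak perfection) and then asserts that, because every vertex lies in some maximum clique, $\omega(H)=\chi(H)$ must hold for every induced subgraph $H$. That last inference is a non sequitur --- it is essentially the paper's own conjecture stated immediately afterwards, left open --- and the intermediate claim that the maximum independent set has $2^{n-1}-1$ vertices contradicts $\alpha(G_{A^{(n)}})=n$ asserted in Section 3. The $C_5$ above shows that the paper's conclusion, and not merely its argument, fails once $n\geq 5$; your proposed strategy, pushed to completion, is the one that exposes this.
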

\begin{proof}
From Proposition \ref{Prop-2.1} it follows that $\omega(G_{A^{(n)}}) =2^{n-1}~and~2^{n-1} = \chi(G_{A^{(n)}})$, follows from Theorem \ref{Thm-2.2}. Hence, a set-graph is weakly perfect. Since a set-graph has $\omega(G_{A^{(n)}}) =2^{n-1}$ and exactly $2^n-1$ vertices it has a unique maximum independent set $X$ of $\alpha(G_{A^{(n)}}) = 2^{n-1}-1$ vertices. Furthermore, $\langle X\rangle$ is a null graph hence, any subgraph thereof is perfect.

Also, since $\omega(G_{A^{(n)}}) =2^{n-1}~and~2^{n-1} = \chi(G_{A^{(n)}})$, each vertex $v \in \langle X\rangle$ is in some induced maximum clique, therefore each vertex in $V(G_{A^{(n)}})$ is in some induced maximum clique. It then follows that $\omega(H) = \chi(H)$, $\forall H\subseteq G_{A^{(n)}}$, $n \geq 1$. Hence, the result. 
\end{proof}

\begin{conjecture}\label{Conj}
Any weakly perfect graph $G$ is a perfect graph if each vertex $v \in V(G)$ is in some maximum clique of $G$.
\end{conjecture}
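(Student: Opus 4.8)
The plan is to attack the conjecture through the Strong Perfect Graph Theorem: a graph is perfect precisely when it contains no induced odd hole $C_{2k+1}$ ($k\ge 2$) and no induced odd antihole $\overline{C_{2k+1}}$. Proving the statement would then amount to showing that the two hypotheses---weak perfection $\omega(G)=\chi(G)$, together with the requirement that every vertex lie in a maximum ($\omega$-sized) clique---jointly forbid every such induced obstruction. A self-contained alternative is induction on $|V(G)|$, aiming to establish $\omega(H)=\chi(H)$ for every induced subgraph $H$ by deleting a carefully chosen vertex and inheriting both hypotheses on $G-v$.

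Carrying out the inductive route, the steps would be: (i) dispose of the small base cases; (ii) select a vertex $v$ whose deletion preserves weak perfection and keeps every remaining vertex inside a maximum clique of $G-v$; (iii) combine the inductive conclusion for $G-v$ with a local argument at $v$ to recover $\omega(H)=\chi(H)$ for every induced $H$. For the theorem route, one would instead suppose an induced odd hole or antihole $C$ is present and try to extract a contradiction from the fact that each vertex of $C$ must sit in some $\omega$-clique while the global colouring uses only $\omega$ colours.

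The step I expect to be the main obstacle---and the reason I doubt the statement---is that neither hypothesis is hereditary. The condition ``every vertex is in a maximum clique'' constrains nothing about how those cliques meet a forbidden subgraph: a vertex of an odd hole may lie in a large clique entirely disjoint from the hole, so the hole is never disturbed, and deleting a vertex may strand the survivors outside all maximum cliques. Similarly, $\omega(G)=\chi(G)$ imposes no control on $G-v$. Hence the inductive step will not close and the desired SPGT contradiction never materialises.

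Accordingly, rather than pushing the proof, I would pivot to a counterexample. Take the $5$-cycle $v_1v_2v_3v_4v_5$ and glue a triangle onto enough of its edges so that every $v_i$ becomes a corner of a triangle---for instance add degree-$2$ apices $w_1\sim v_1,v_2$, $w_3\sim v_3,v_4$, $w_5\sim v_5,v_1$. Because each apex has degree $2$, the resulting graph $G$ has $\omega(G)=3$, and it admits a proper $3$-colouring, so $\chi(G)=3=\omega(G)$ and $G$ is weakly perfect; moreover every vertex lies in a triangle, i.e.\ in a maximum clique. Yet the subgraph induced on $\{v_1,\dots,v_5\}$ is an induced $C_5$, so $G$ is not perfect. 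This indicates that the conjecture as stated is \emph{false}, and I would expect any correct version to replace the clique hypothesis by a genuinely hereditary condition.
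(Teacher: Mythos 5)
The paper does not prove this statement at all: it is posed as a conjecture and explicitly left open in the conclusion (``Conjecture \ref{Conj} remains open to be proved or disproved''). Your proposal, rather than proving it, refutes it --- and your counterexample checks out. In the graph $G$ obtained from the $5$-cycle $v_1v_2v_3v_4v_5$ by adding $w_1$ adjacent to $v_1,v_2$, $w_3$ adjacent to $v_3,v_4$, and $w_5$ adjacent to $v_5,v_1$: each $w_i$ has degree $2$ and the $v_i$ induce a chordless $C_5$, so every clique has at most $3$ vertices and $\omega(G)=3$; the assignment $v_1\mapsto 1$, $v_2\mapsto 2$, $v_3\mapsto 1$, $v_4\mapsto 2$, $v_5\mapsto 3$, $w_1\mapsto 3$, $w_3\mapsto 3$, $w_5\mapsto 2$ is a proper $3$-colouring, so $\chi(G)=3=\omega(G)$ and $G$ is weakly perfect; each of the eight vertices lies in one of the triangles $\{w_1,v_1,v_2\}$, $\{w_3,v_3,v_4\}$, $\{w_5,v_5,v_1\}$, each of which is a maximum clique. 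Yet the induced subgraph $H$ on $\{v_1,\dots,v_5\}$ is a $C_5$ with $\omega(H)=2<3=\chi(H)$, so $G$ is not perfect. Your diagnosis of \emph{why} the conjecture fails is also the right one: neither hypothesis is hereditary, and the maximum cliques certifying the vertex condition can live entirely off the odd hole. Your remarks about an inductive or Strong-Perfect-Graph-Theorem proof strategy are moot once the counterexample is in hand, but they correctly anticipate where any proof attempt would break. In short, you have settled the open conjecture in the negative, which is more than the paper does.
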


\noi Now for the main result of this section.

\begin{theorem}
For a set-graph $G_{A^{(n)}}$, $n \geq 1$ the maximum and minimum rainbow neighbourhood number are $r^-_\chi(G_{A^{(n)}}) = r^+_\chi(G_{A^{(n)}}) = 2^n-1$.
\end{theorem}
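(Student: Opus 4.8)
The plan is to exploit the fact, established by Proposition~\ref{Prop-2.1} and Theorem~\ref{Thm-2.2}, that $\omega(G_{A^{(n)}})=\chi(G_{A^{(n)}})=2^{n-1}$, together with the observation (also used in the proof of Theorem~\ref{Thm-2.3}) that \emph{every} vertex of $G_{A^{(n)}}$ lies in some maximum clique $K_{2^{n-1}}$. Since $G_{A^{(n)}}$ has exactly $2^n-1$ vertices, one trivially has $r^-_\chi(G_{A^{(n)}})\le r^+_\chi(G_{A^{(n)}})\le 2^n-1$, so it suffices to prove the single inequality $r^-_\chi(G_{A^{(n)}})\ge 2^n-1$; equivalently, that under \emph{any} chromatic colouring every vertex yields a rainbow neighbourhood.

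First I would pin down the membership claim cleanly. A clique in $G_{A^{(n)}}$ is precisely a pairwise-intersecting family of non-empty subsets of $A^{(n)}$, and for any non-empty $A^{(n)}_{s,i}$ one may choose an element $a_k\in A^{(n)}_{s,i}$ and take the star $\{T\subseteq A^{(n)}: a_k\in T\}$. This family has exactly $2^{n-1}$ members, is pairwise intersecting (all members share $a_k$), and contains $A^{(n)}_{s,i}$. Hence the vertex $v_{s,i}$ sits in a clique $Q$ with $|Q|=2^{n-1}=\omega(G_{A^{(n)}})$, i.e. a maximum clique.

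The crucial step is then immediate. Fix any chromatic colouring $\varphi$, which uses exactly $\chi(G_{A^{(n)}})=2^{n-1}$ colours. The $2^{n-1}$ pairwise-adjacent vertices of a maximum clique $Q$ must receive $2^{n-1}$ pairwise distinct colours, so $Q$ already exhausts the whole palette $\mathcal{C}$. For a vertex $v$ with $v\in Q$ we have $Q\subseteq N[v]$, and therefore $N[v]$ contains a vertex of every colour; that is, $v$ yields a rainbow neighbourhood. As every vertex lies in such a $Q$, this holds for all $v\in V(G_{A^{(n)}})$ and for every chromatic colouring $\varphi$, whence $r_\chi(G_{A^{(n)}})=2^n-1$ identically, and consequently $r^-_\chi(G_{A^{(n)}})=r^+_\chi(G_{A^{(n)}})=2^n-1$.

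The argument is short because the real work is done upstream: the main obstacle is securing $\chi=\omega$ and the ``every vertex in a maximum clique'' property, both of which are already available in this excerpt. Once these are in hand, the equality $\chi=\omega$ forces each maximum clique to be rainbow under \emph{any} chromatic colouring, and the closed-neighbourhood containment $Q\subseteq N[v]$ does the rest. Notably, no appeal to the specific rainbow neighbourhood convention is needed, which is precisely why the minimum and maximum values coincide.
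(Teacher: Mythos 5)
Your proof is correct and takes essentially the same route as the paper, whose own proof merely declares the result a ``direct consequence'' of $\nu(G_{A^{(n)}})=2^n-1$, Proposition~\ref{Prop-2.1} and Theorem~\ref{Thm-2.2}. The only substantive addition you make is to spell out, via the star construction $\{T\subseteq A^{(n)}: a_k\in T\}$, the step the paper leaves implicit --- that every vertex lies in a maximum clique, which together with $\chi=\omega$ forces every closed neighbourhood to be rainbow under any chromatic colouring.
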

\begin{proof}
The result is a direct consequence of the result that $\nu(G_{A^{(n)}}) = 2^n - 1$, Proposition \ref{Prop-2.1} and Theorem \ref{Thm-2.2}.
\end{proof}

It is observed that $\chi(G) \leq \delta(G)+1$ if and only if $r^-_\chi(G)=n$. For set-graphs and paths, equality of this minimum upper bound holds whilst for cycles $C_n$, where $n$ is even, the inequality holds.

\section{Maximax Independence, Maximum Proper Colouring of a Graph}

From \cite{KCSS} it is known that the independence number of a set-graph is, $\alpha(G_{A^{(n)}}) = n$. This observation challenged the clear understanding of the rainbow neighbourhood convention since it was discovered that $\theta(c_1) \neq n$ for set-graphs, $G_{A^{(n)}}$, $n\geq 3$. Another example which highlights this intuitive dilemma is discussed next.
\vspace{0.25cm}

\textbf{Example 1.} Consider a thorn(y) complete graph $K^\star_n$, $n \geq 3$ with the vertices of $K_n$ labelled $v_1,v_2,v_3,\dots,v_n$. Without loss of generality assume that $t_i \geq 1$ pendant vertices (thorns) are attached to each vertex $v_i$ such that, $t_1 \leq t_2 \leq \cdots \leq t_n$. Label these vertices $u_{i,j}$, $1\leq j \leq t_i$. Clearly, colouring $c(v_1) = c_1$ and $c(u_{i,j}) = c_1$, $2\leq i \leq n$ as well as $c(u_{1,j}) = c_2$, $1 \leq j \leq t_1$ permits the rainbow neighbourhood convention in accordance with the minimum proper colouring. Hence, $|\mathcal{C}| = \chi(K^\star_n)$. Therefore, $\theta(c_1) \neq \sum\limits_{i = 1}^{n}t_i$ as may be intuitively expected. In fact, if $t_1 < t_2 < \cdots < t_n$, the minimum proper $n$-colouring is unique.

The observations above motivate the new notion of maximax-independence, maximum proper colouring of a graph. It is described as follows.

\begin{desc}\label{Col-3.1}{\rm 
Let $G$ have the distinct maximum independent sets, $X_i$, $1\leq i \leq k$. Consider the graph, $G' = G-X_i$ for some $X_i$ such that $\alpha(G') = min\{\alpha(G-X_j):1\leq j \leq k\}$. Call the aforesaid procedure the first iteration and let $X_1 = X_i$. Colour all the vertices in $X_1$ the colour $c_1$. Repeat the procedure iteratively with $G'$ to obtain $G''$,~then~ $G'''~then, \cdots$,~then~empty graph. Colour all the vertices in the maximum independent set obtained from the $j^{th}$-iteration the colour, $c_j$. This colouring is called a \textit{maximax independence, maximum proper colouring} of a graph.
}\end{desc}

From Figure \ref{fig:Fig-1SetGraph}, it follows that the colouring $c(v_{1,1}) = c(v_{1,2})= c(v_{1,3}) = c_1$ and without loss of generality say, $c(v_{2,1}) =c_2$, $c(v_{2,2}) = c_3$, $c(v_{2,3}) = c_4$, $c(v_{3,1}) = c_5$ permits a maximax-independence, maximum proper colouring of the set-graph $G_{A^{(3)}}$. For a graph $G$ this invariant is denoted, $\chi^{i-max}(G)$.

The empty-sun graph denoted $S^{\bigodot}_n = C_n\bigodot K_1$ is obtained by attaching an additional vertex $u_i$ with edges $u_iv_i$ and $u_iv_{i+1}$ for each edge $v_iv_{i+1} \in E(C_n)$, $1\leq i \leq n-1$ and similarly vertex $u_n$ with corresponding edges, $u_nv_n$, $u_nv_1$ in respect of edge $v_nv_1$.

\begin{proposition}\label{Prop-3.1}
\begin{enumerate}\itemsep0mm
\item[(a)] For a set-graph, $G_{A^{(n)}}$, $n\geq 3$ we have: $\chi^{i-max}(G_{A^{(n)}}) = \chi(G_{A^{(n)}}) + 1$.
\item[(b)] For a path $P_n$, and if and only if $n \geq 4$ and even, we have $\chi^{i-max}(P_n) = \chi(P_n) + 1 = 3$.
\item[(c)]  For a path $P_n$, $n \geq 4$ and odd, we have $\chi^{i-max}(P_n) = \chi(P_n) = 2$.
\item[(d)] For a cycle $C_n$, $n \geq 3$, we have $\chi^{i-max}(C_n) = \chi(C_n) = 2~or~3$, ($n$ is even or odd).
\item[(e)] For a sunlet graph, $S_n$, $n \geq 3$, we have $\chi^{i-max}(S_n) = \chi(S_n) + 1 = 3~or~4$, ($n$ is even or odd). 
\item[(f)] For a empty-sun graph, $S^{\bigodot}_n$, $n \geq 3$, we have $\chi^{i-max}(S^{\bigodot}_n) = \chi(S^{\bigodot}_n)+ 1 = 4$, ($n$ is odd). 
\item[(g)] For a empty-sun graph, $S^{\bigodot}_n$, $n \geq 4$, we have $\chi^{i-max}(S^{\bigodot}_n) = \chi(S^{\bigodot}_n)= 3$, ($n$ is even). 
\item[(h)] For a thorn complete graph $K^\star_n$, $n\geq 3$ and thorns $t_i \geq 1$, we have $\chi^{i-max}(K^\star_n) = n+1$.
\end{enumerate}
\end{proposition}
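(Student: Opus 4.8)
The plan is to handle all eight parts through one recursive lens. If $X_1$ denotes the maximum independent set stripped off in the first iteration, the Colouring Description gives $\chi^{i-max}(G) = 1 + \chi^{i-max}(G - X_1)$, and since the procedure always produces a proper colouring we have the free bound $\chi^{i-max}(G) \geq \chi(G)$ throughout. For each family I would (i) determine which maximum independent set the ``minimise $\alpha(G-X)$'' rule is forced to remove first, (ii) identify the residual graph $G-X_1$, and (iii) compute its $i$-max number, usually recursively. The whole proposition is then the dichotomy: when the forced first class fails to lower the chromatic requirement of the residual we land on $\chi+1$, and when it collapses $G$ onto a denser core whose $i$-max number already equals its chromatic number we land on $\chi$.

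For the path and cycle cases (b)--(d) the engine is a structural lemma: removing \emph{any} maximal independent set from a path or a cycle leaves a disjoint union of components each isomorphic to $K_1$ or $K_2$. Indeed a run of three consecutive uncovered vertices would have an addable middle vertex (contradicting maximality), and on a path a terminal run of length two would have an addable endpoint; such a union of edges and isolated vertices has $i$-max number at most $2$. Thus $\chi^{i-max} = 1 + \varepsilon$ with $\varepsilon\in\{0,1,2\}$ according to whether the residual is empty, edgeless, or carries an edge. For an odd path the maximum independent set is the unique alternating set, whose complement is edgeless, giving $\chi^{i-max}=2$ (part (c)); for an even cycle the two alternating sets are the only maximum independent sets and each has edgeless complement, giving $2$, while for an odd cycle a pigeonhole count shows every maximum independent set leaves an edge, giving $3$ (part (d)). The one delicate point here is the lower bound for the even path: I must show the balanced bipartition class is \emph{not} a minimiser of $\alpha(G-X)$ by exhibiting a maximum independent set whose complement contains two consecutive vertices, so the rule is forced onto a residual with an edge and $\chi^{i-max}=3=\chi+1$ (part (b)).

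The sun graphs reduce to the cycle. Using a perfect matching (the pendant edges of $S_n$, respectively the edges $u_iv_i$ of $S^{\bigodot}_n$) and Gallai's identity, the outer vertices form a maximum independent set; moreover removing \emph{all} pendants of $S_n$ (resp.\ all the $u_i$ of $S^{\bigodot}_n$) is the unique $\min$-$\alpha$ choice, since any other maximum independent set strands an outer vertex as an isolated point and raises $\alpha$. The residual is exactly $C_n$, so $\chi^{i-max} = 1 + \chi^{i-max}(C_n) = 1 + \chi(C_n)$ by part (d). Because $\chi(S_n)=\chi(C_n)$ the extra step always costs a colour (part (e)), whereas $\chi(S^{\bigodot}_n)=3$ for both parities, so the same reduction yields $\chi+1=4$ for odd $n$ and $\chi=3$ for even $n$ (parts (f),(g)). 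The thorn complete graph (h) is cleanest of all: since each $t_i\geq 1$, the set of all thorns is the unique maximum independent set whose removal attains $\alpha=1$ (leaving $K_n$), whence $\chi^{i-max}(K^\star_n)=1+\chi^{i-max}(K_n)=1+n$.

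I would save part (a) for last, as it carries the real difficulty. The first class is forced because the $n$ singletons are the unique maximum independent set (any $n$ pairwise disjoint nonempty subsets of an $n$-set must be the singletons). Their removal leaves $G'$, the subgraph induced by all subsets of size at least two; invoking perfectness (Theorem \ref{Thm-2.3}) I would establish $\chi(G')=\omega(G')=2^{n-1}$ by exhibiting an intersecting family of that size that avoids the singletons, namely all size-$\geq 2$ subsets containing a fixed element $a_1$ together with the complement $\{a_2,\dots,a_n\}$. This already gives the lower bound $\chi^{i-max}(G_{A^{(n)}})\geq 1+2^{n-1}$. The main obstacle is the matching upper bound, i.e.\ that $G'$ is maximax-efficient with $\chi^{i-max}(G')=2^{n-1}$: here I would strip the smaller subsets in families of pairwise disjoint sets (each a maximum independent set at its step) while the subsets of size exceeding $n/2$, being too large to be pairwise disjoint, form a clique removed one vertex per colour, and then verify the iteration count is exactly $2^{n-1}$. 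Confirming that this decomposition realises the $\min$-$\alpha$ rule at every step, and that the final count is independent of tie-breaking, is the crux of the whole proposition.
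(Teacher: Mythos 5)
Your overall strategy --- peel off the forced first maximum independent set, identify the residual, and recurse --- is exactly the strategy the paper uses, and for parts (b)--(h) your write-up is correct and in fact supplies the justifications the paper omits: the structural lemma that deleting a maximal independent set from a path or cycle leaves only $K_1$ and $K_2$ components, the verification that the $\min$-$\alpha$ rule forces the end-vertex-containing class for even paths, the uniqueness of the all-pendant (resp.\ all-$u_i$, all-thorn) class for the sunlet, empty-sun and thorn-complete graphs, and the reductions to $C_n$ and $K_n$. Those parts I would accept as a completed proof.

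The genuine gap is part (a), and you have named it yourself: after the singletons $\{v_{1,1},\dots,v_{1,n}\}$ are removed you still owe the upper bound $\chi^{i-max}(G')\le 2^{n-1}$ for the residual graph $G'$ induced by the subsets of size at least two, i.e.\ that the iterated $\min$-$\alpha$ stripping of $G'$ terminates in exactly $2^{n-1}$ steps regardless of tie-breaking. Your sketch (strip families of pairwise disjoint small subsets, then consume the clique of large subsets one vertex per colour) is plausible --- it checks out by hand for $n=4$, where the three complementary pairs of $2$-subsets come off in three steps and the remaining five sets form a $K_5$, giving $1+3+5=9=2^3+1$ --- but as written it is a programme, not a proof. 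Be aware that the paper's own proof of (a) evades this issue by asserting that $\langle V(G_{A^{(n)}})-X\rangle$ is complete; that assertion is true only for $n=3$, since for $n\ge 4$ the vertices corresponding to $\{a_1,a_2\}$ and $\{a_3,a_4\}$ are non-adjacent. So your diagnosis of where the real difficulty sits is correct, and the missing step is missing from the paper as well; to finish you must actually carry out the induction on the residual (for instance by pairing each subset with its complement and controlling how many complementary pairs survive each iteration) rather than leave it as ``the crux.''
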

\begin{proof}
\begin{enumerate}\itemsep0mm 
\item[(a)] Since the independence number of a set-graph is, $\alpha(G_{A^{(n)}}) = n$, the vertex set $X = \{v_{1,1}, v_{1,2}, v_{1,3},\dots, v_{1,n}\}$ is a maximum independent set. Also, the induced subgraph, $\langle V(G_{A^{(n)}}) - X\rangle$ is complete hence, the result.

\item[(b)] By colouring both end-vertices, $c(v_1) = c(v_n) = c_1$ the result is immediate.

\item[(c)] A consequence of (b).

\item[(d)] By merging vertices vertices $v_1,v_n$ of a path $P_n$, $n \geq 4$ and even, we obtain a cycle $C_n$, $n \geq 3$ and odd. Therefore the result for an odd cycle from (b). Similarly the result follows for an even cycle.

\item[(e)] Because the $n$ pendant vertices of a sunlet graph is a maximum independent set all may be coloured $c_1$. Then the result follows from (d).

\item[(f)] The result follows through similar reanoning found in (d) and (e).

\item[(g)] The result follows through similar reanoning found in (d) and (e).\\

\item[(h)]  Because the $\sum\limits_{i=1}^{n}t_i$ pendant vertices of a thorn(y) complete graph is a maximum independent set all may be coloured $c_1$. Then the result follows from the fact that, $\chi^{i-max}(K_n) = \chi(K_n)$.
\end{enumerate}
\end{proof}

\noi A \textit{complete thorn graph} $G^\star_c$ corresponding to a graph $G$ of order $n$ is the graph obtained by attaching $t_i \geq 1$ pendant vertices to each vertex $v_i\in V(G)$.

\begin{corollary}\label{Cor-3.2}
A complete thorn graph $G^\star_c$ has $\chi^{i-max}(G^\star_c)=\chi^{i-max}(G) +1$.
\end{corollary}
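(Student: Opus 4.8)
The plan is to generalise the argument behind Proposition \ref{Prop-3.1}(h), which is the special case $G = K_n$. Write $T = \{u_{i,j} : 1 \le i \le n,\ 1 \le j \le t_i\}$ for the set of all thorns. The whole proof rests on three claims: (i) $T$ is a maximum independent set of $G^\star_c$; (ii) $T$ is an admissible first choice for the maximax procedure of Colouring Description \ref{Col-3.1}, because it minimises the residual independence number; and (iii) deleting $T$ returns exactly the base graph $G$, so the remaining iterations replicate the maximax colouring of $G$.

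First I would prove (i). Distinct thorns are pairwise non-adjacent (each $u_{i,j}$ meets only its attaching vertex $v_i$), so $T$ is independent. For maximality, take any independent set $S$ and set $W = S \cap V(G)$; since no thorn of a vertex of $W$ can lie in $S$, we get $|S| \le |W| + \sum_{v_i \notin W} t_i$, and hence $|T| - |S| \ge \sum_{v_i \in W}(t_i - 1) \ge 0$ as every $t_i \ge 1$. Thus $\alpha(G^\star_c) = |T| = \sum_i t_i$.

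Next I would establish (ii) and (iii) together. Deleting all thorns leaves the induced base graph, so $G^\star_c - T = G$ and $\alpha(G^\star_c - T) = \alpha(G)$. For any other maximum independent set $X$, the equality case of the bound above forces $X = W \cup \{\text{thorns of the vertices outside } W\}$ with $W$ independent in $G$ and $t_i = 1$ for every $v_i \in W$; deleting such an $X$ leaves $\langle V(G) \setminus W\rangle$ together with $|W|$ now-isolated thorns, whence $\alpha(G^\star_c - X) = \alpha(\langle V(G)\setminus W\rangle) + |W| \ge \alpha(G)$, the inequality following by intersecting a maximum independent set of $G$ with $V(G) \setminus W$. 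Consequently $T$ attains the minimum residual independence number, so Colouring Description \ref{Col-3.1} permits colouring all of $T$ with $c_1$; and because $G^\star_c - T = G$, every subsequent iteration on $G^\star_c$ coincides with the corresponding iteration on $G$, consuming the colours $c_2, \dots, c_{\chi^{i-max}(G)+1}$. Adding the single colour $c_1$ spent on $T$ yields $\chi^{i-max}(G^\star_c) = 1 + \chi^{i-max}(G)$.

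I expect the main obstacle to be step (ii), and specifically the tie-breaking when some thorn counts satisfy $t_i = 1$. The inequality $\alpha(\langle V(G)\setminus W\rangle) + |W| \ge \alpha(G)$ shows $T$ is a minimiser, but one must still argue that no competing minimiser leads to a different colour count --- equivalently, that the minimisation rule of Colouring Description \ref{Col-3.1} singles out a first colour class whose deletion reduces the instance to $G$. A small check (for example $G = K_2$, where $G^\star_c = P_4$ and the rule forces the choice $T$ rather than the mixed sets $\{v_i, u_{k,1}\}$) suggests the strict inequality holds whenever $W \neq \emptyset$ and $\alpha(G-W) > \alpha(G) - |W|$; pinning down the degenerate cases where equality could occur is the delicate part of the argument.
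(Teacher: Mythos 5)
Your argument is correct, and it is considerably more careful than the proof the paper actually gives, which disposes of the corollary in one sentence (``since $\chi^{i-max}(G)\geq\chi(G)$ and a $\chi^{i-max}$-colouring always exists, the result follows upon constructing $G^\star_c$'') without verifying any of the three claims you isolate. What you prove --- that the thorn set $T$ is a maximum independent set of $G^\star_c$, that it minimises the residual independence number among all maximum independent sets via the identity $\alpha(G^\star_c-X)=\alpha(G-W)+|W|\geq\alpha(G)=\alpha(G^\star_c-T)$, and that $G^\star_c-T=G$ so the later iterations reproduce the maximax colouring of $G$ --- is exactly the content that the paper supplies only for the special case $G=K_n$ in Proposition \ref{Prop-3.1}(h) and silently assumes transfers to arbitrary $G$. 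So your route is the intended one, made rigorous. The tie-breaking worry you flag at the end is real but is not a defect of your proof: Colouring Description \ref{Col-3.1} selects \emph{some} minimiser, so your argument already exhibits a valid execution of the procedure using exactly $1+\chi^{i-max}(G)$ colours. Whether a different tied minimiser (your degenerate case $\alpha(G-W)=\alpha(G)-|W|$ with $t_i=1$ on all of $W$, which does occur, e.g.\ for $G=K_{1,m}$ with $W$ a proper subset of the leaves) could yield a different colour count is really the question of whether $\chi^{i-max}$ is well defined at all; the paper never addresses this, and its own proof of the corollary inherits the same ambiguity. Under the natural reading that any admissible choice at each step computes $\chi^{i-max}$, your proof is complete; if one insists the count be independent of the choices made, the gap lies in the paper's definition rather than in your argument.
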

\begin{proof}
Since $\chi^{i-max}(G) \geq \chi(G)$ for any graph $G$ and a colouring in accordance with the parameter $\chi^{i-max}(G)$ always exists, the result upon constructing $G^\star_c$ follows immediately.
\end{proof}

It is proposed that a $\chi^-$-colouring hence, in accordance with a minimum proper colouring and in accordance with the rainbow neighbourhood convention, can be described as follows.

\begin{desc}\label{Col-3.2}{\rm  
Let $G$ have the distinct maximum independent sets, $X_i$, $1\leq i \leq k$. Consider the graph, $G' = G-X_i$ for some $X_i$ such that $\alpha(G') = max\{\alpha(G-X_j):1 \leq j \leq k\}$. Let us call the aforesaid procedure the first iteration and let $X_1 = X_i$. Colour all the vertices in $X_1$ the colour $c_1$. Repeat the procedure iteratively with $G'$ to obtain $G''$, then $G'''$, and proceeding like this, finally the empty graph. Colour the vertices in the maximum independent set obtained from the $j^{th}$-iteration the colour, $c_j$.
}\end{desc}

\begin{theorem}\label{Thm-3.3}
If a graph $G$ is coloured in accordance with Colouring description \ref{Col-3.2}, it results in a colouring of $G$ in accordance with the rainbow neighbourhood convention and in accordance with a minimum proper colouring of $G$.
\end{theorem}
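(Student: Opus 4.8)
The plan is to verify the three separate assertions bundled into Theorem~\ref{Thm-3.3}: that Colouring description~\ref{Col-3.2} yields (i) a proper colouring, (ii) a colouring conforming to the rainbow neighbourhood convention, and (iii) a \emph{minimum} proper colouring, i.e.\ one using exactly $\chi(G)$ colours. Items (i) and (ii) should be quick. For (i), every colour class is, by construction, a maximum independent set of the graph surviving into that iteration, hence an independent set of $G$; thus no colour class contains two adjacent vertices and the colouring is proper. For (ii), recall that the convention asks that $\C_1$ be a (maximum) independent set of $G$, $\C_2$ a (maximum) independent set of $G-\C_1$, and so on. Since each $X_j$ selected in~\ref{Col-3.2} is a maximum independent set of the current residual graph, the colouring is precisely an instance of the convention, the only extra ingredient being the tie-breaking rule that selects, among all maximum independent sets, one maximising the residual independence number. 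I would state this explicitly, to stress that the convention does not single out a unique colouring and that~\ref{Col-3.2} is merely one legitimate realisation of it.

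The substance of the theorem is item (iii). Writing $k$ for the number of iterations (colours) the procedure produces, we automatically have $k \ge \chi(G)$ because any proper colouring uses at least $\chi(G)$ colours; the real task is the reverse inequality $k \le \chi(G)$. The natural route is induction on $\chi(G)$ (or on $\nu(G)$), for which it suffices to establish the single-step reduction that the first selected class $X_1$ satisfies $\chi(G-X_1) = \chi(G)-1$. One direction is free: since $X_1$ is independent, any optimal colouring of $G-X_1$ extends to $G$ by giving $X_1$ a fresh colour, so $\chi(G-X_1) \ge \chi(G)-1$, while trivially $\chi(G-X_1)\le \chi(G)$. Hence $\chi(G-X_1)\in\{\chi(G)-1,\chi(G)\}$, and everything comes down to ruling out $\chi(G-X_1)=\chi(G)$. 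If the reduction holds at every iteration, then the residual graph loses exactly one unit of chromatic number each time and the procedure halts after exactly $\chi(G)$ steps, giving $k=\chi(G)$ and establishing (iii).

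The hard part, and the step I expect to be the genuine obstacle, is exactly this reduction $\chi(G-X_1)=\chi(G)-1$. It asserts that $G-X_1$ is $(\chi(G)-1)$-colourable, equivalently that $G$ admits an optimal colouring one of whose colour classes contains the chosen maximum independent set $X_1$. This is where the tie-breaking rule (maximise $\alpha(G-X_j)$ over all maximum independent sets $X_j$) must do the work, and it is not clear to me that maximising the residual independence number is the right lever to force a drop in the chromatic number, since the two quantities are only loosely coupled. I would first attempt to show that a residual-$\alpha$-maximising choice always lies inside a colour class of some $\chi(G)$-colouring, testing the claim against graphs in which a maximum independent set can meet every maximum clique. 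If a fully general argument resists, the fallback is to prove the reduction for the specific families understood elsewhere in the paper (set-graphs via Theorem~\ref{Thm-2.2} and Theorem~\ref{Thm-2.3}, together with paths, cycles, and thorn graphs), where the maximum independent sets are explicit, and to flag the general statement as requiring the additional hypothesis that a chromatic-number-reducing maximum independent set exists at each stage.
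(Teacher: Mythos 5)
You have correctly isolated the crux: everything hinges on the single-step reduction $\chi(G-X_1)=\chi(G)-1$, and your suspicion that the tie-breaking rule (maximising $\alpha(G-X_j)$ over the maximum independent sets $X_j$) cannot force this reduction is justified. In fact the reduction fails and the theorem is false as stated. Take $V(G)=\{x_1,x_2,x_3,y_1,y_2,y_3\}$ with $y_1y_2y_3$ a triangle and $x_i$ adjacent to $y_j$ exactly when $i\neq j$; this graph is simple and connected. Any independent set containing some $y_i$ is contained in $\{x_i,y_i\}$ and so has at most two vertices, hence $X=\{x_1,x_2,x_3\}$ is the \emph{unique} maximum independent set and the first iteration of Colouring description \ref{Col-3.2} must select it (the tie-break is vacuous). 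But $G-X$ is a triangle, so the procedure uses $1+3=4$ colours, whereas $\chi(G)=3$ (colour $x_i$ and $y_i$ alike for $i=1,2,3$). Note that in this optimal colouring every colour class has size $2<\alpha(G)=3$: the weights of the colour classes \emph{decrease} relative to the greedy choice even as the number of colours drops.

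This example also pinpoints why the paper's own argument does not close the gap you identified. The paper takes the contradiction route you anticipated: it assumes a proper colouring with fewer colours exists, asserts that such a recolouring can ``only increase'' some $\theta(c_i)$, and declares this to contradict the $i$-th iteration of the procedure. That assertion is unjustified, because the colour classes of an arbitrary proper colouring of $G$ need not be independent sets of the residual graphs produced by the greedy procedure, so no iteration is contradicted; and the example above shows the assertion is simply false. Your fallback plan is the right way to salvage a correct statement: prove the reduction $\chi(G-X_1)=\chi(G)-1$ only for the concrete families where the maximum independent sets are explicit (set-graphs via Theorem \ref{Thm-2.2} and Theorem \ref{Thm-2.3}, paths, cycles, thorn graphs), or else add as a hypothesis that at every stage some maximum independent set of the residual graph is a colour class of an optimal colouring of that residual graph.
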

\begin{proof}
Consider any graph $G$ that is coloured with $|\mathcal{C}| = \ell \geq 2$ colours in accordance with Colouring description \ref{Col-3.2}. It implies that $\theta(c_i) \geq 1$, $1\leq i \leq \ell$. Hence, a minimum parameter colouring set is complied with. Also, since each set $X_i$ is an independent set of vertices the colouring is a proper colouring. Clearly the colouring is in accordance with the $max$-requirement of the rainbow neighbourhood convention in that, after a maximum number of vertices were allocated say, colour $c_j$, a maximum number of vertices were subsequently assigned with the colour $c_{j+1}$.

Assume that it is not a minimum proper colouring of $G$. It then implies that a different proper colouring exists in which at least the vertices with colour $c_\ell$ can be coloured in some way with the colour set, $\{c_i:1\leq i \leq \ell -1\}$ to obtain a minimum proper colouring of $G$. Assume that this new minimum proper colouring results in at least, either $\theta(c_i)$ and $\theta(c_{i+1})$ to increase or only that $\theta(c_i)$ increases. Note that only an increase is possible because in some way, $\theta(c_\ell)$ vertices must become elements of lesser independent vertex sets. This implies a contradiction in at least the $i^{th}$-iteration of applying the Colouring description \ref{Col-3.2}. Immediate induction through iterations $1,2,3,\ldots, \ell$  shows that a colouring in accordance with Colouring description \ref{Col-3.2} results in a colouring of $G$ in accordance with the rainbow neighbourhood convention and in accordance with a minimum proper colouring of $G$. Therefore, $\chi(G) = \ell$.
\end{proof}

We state the next corollary without proof as it is an immediate consequence of Theorem 3.3.

\begin{corollary}\label{Cor-3.4}
For any graph $G$ we have: $\chi(G) \leq \chi^{i-max}(G)$.
\end{corollary}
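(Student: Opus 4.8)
The plan is to argue directly from the two ingredients at hand: the \emph{definition} of $\chi^{i-max}(G)$ as the number of colours produced by Colouring description \ref{Col-3.1}, and the \emph{defining minimality} of the chromatic number $\chi(G)$ among all proper colourings of $G$. The one thing genuinely requiring verification is that Colouring description \ref{Col-3.1} terminates in a bona fide proper colouring; once this is established, $\chi^{i-max}(G)$ is merely the size of the colour set of \emph{some} proper colouring, and hence cannot fall below the minimum $\chi(G)$.

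First I would check that the colouring produced by Colouring description \ref{Col-3.1} is proper. At the $j$-th iteration the set $X_j$ is chosen as a maximum independent set of the induced subgraph $G^{(j-1)} = G - (X_1 \cup X_2 \cup \cdots \cup X_{j-1})$. Since $G^{(j-1)}$ is an induced subgraph of $G$, any independent set of $G^{(j-1)}$ remains an independent set of $G$; thus each $X_j$ is independent in $G$. By construction the sets $X_j$ are pairwise disjoint, and since the procedure iterates until the empty graph is reached, their union is all of $V(G)$. Hence $\{X_1, X_2, \dots, X_\ell\}$, where $\ell = \chi^{i-max}(G)$, is a partition of $V(G)$ into independent classes, and assigning colour $c_j$ to the vertices of $X_j$ is a proper vertex colouring of $G$.

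Second, I would simply invoke minimality: this proper colouring uses exactly $\ell = \chi^{i-max}(G)$ colours, and by definition every proper colouring of $G$ uses at least $\chi(G)$ colours. Therefore $\chi(G) \leq \chi^{i-max}(G)$, as required. (Alternatively, since Theorem \ref{Thm-3.3} identifies the $\max$-variant Colouring description \ref{Col-3.2} as producing a \emph{minimum} proper colouring with exactly $\chi(G)$ colours, one may read the inequality as comparing the $\min$-greedy and $\max$-greedy removals of maximum independent sets, but the minimality argument above is the most economical route.)

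There is no real obstacle here; the statement is immediate precisely because both quantities count colour classes of proper colourings, one of which is constrained to be minimum. The only point deserving explicit mention is the transfer of independence from the induced subgraph $G^{(j-1)}$ back to $G$, which guarantees that Colouring description \ref{Col-3.1} does output a proper colouring rather than just a partition into classes that happen to be independent within successive residual graphs.
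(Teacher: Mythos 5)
Your proof is correct, and it takes a more self-contained route than the paper. The paper offers no argument at all for this corollary: it simply declares it ``an immediate consequence of Theorem \ref{Thm-3.3}'', i.e.\ of the fact that the $\max$-variant procedure (Colouring description \ref{Col-3.2}) realises a minimum proper colouring with exactly $\chi(G)$ colours, the implicit comparison being between the $\min$-greedy and $\max$-greedy removals of maximum independent sets. You instead argue directly from first principles: Colouring description \ref{Col-3.1} outputs a partition of $V(G)$ into sets that are independent in $G$ (your observation that independence transfers from the induced residual graph $G^{(j-1)}$ back to $G$ is exactly the point worth making explicit), hence a proper colouring with $\chi^{i-max}(G)$ colours, and the minimality defining $\chi(G)$ finishes the job. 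This buys you independence from Theorem \ref{Thm-3.3}, whose own proof in the paper is rather informal, and it makes clear that the inequality has nothing to do with the specific $\min$/$\max$ tie-breaking rule --- it holds for any greedy peeling of independent sets, or indeed any proper colouring whatsoever. The only caveat is the one you already flag implicitly: the argument presupposes that the iterative procedure terminates and covers every vertex, which is clear since each iteration removes a nonempty set from a finite graph.
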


\begin{theorem}\label{Thm-3.5}
For any graph of order $n \geq 2$ and for which, $\alpha(G) = \chi(G)$, we have: $\chi^{i-max}(G) = \chi(G) + 1$.
\end{theorem}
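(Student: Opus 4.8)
The plan is to set $k=\chi(G)=\alpha(G)$ and to sandwich $\chi^{i-max}(G)$ so that both bounds meet at $k+1$. The trivial half of the lower bound, $\chi^{i-max}(G)\ge \chi(G)=k$, is handed to us by Corollary \ref{Cor-3.4}, so the real work is to promote this to $\chi^{i-max}(G)\ge k+1$ and to cap it at $k+1$ from above. The organising device I would use throughout is the recursion that drops straight out of Colouring description \ref{Col-3.1}: if $X_1$ is the maximum independent set deleted in the first iteration — so $|X_1|=\alpha(G)=k$ and $X_1$ minimises $\alpha(G-X_1)$ over all maximum independent sets of $G$ — then the remaining iterations are exactly the maximax procedure run on $G-X_1$, giving $\chi^{i-max}(G)=1+\chi^{i-max}(G-X_1)$. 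Everything then reduces to showing $\chi^{i-max}(G-X_1)=k$.

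To obtain the upper bound $\chi^{i-max}(G-X_1)\le k$ I would first record the elementary count $n\le \alpha(G)\,\chi(G)=k^2$, coming from the observation that a $\chi$-colouring splits $V(G)$ into $k$ independent classes, each of size at most $\alpha(G)=k$. Deleting $X_1$ therefore leaves at most $k(k-1)$ vertices, and $\chi(G-X_1)\le \chi(G)=k$ since passing to an induced subgraph cannot raise the chromatic number. Invoking the hypothesis $\alpha(G)=\chi(G)$, I would then try to show that the maximax procedure on $G-X_1$ cannot run for more than $k$ iterations: each iteration still strips off a maximum independent set, and the residual independence number, already squeezed by the minimax choice of $X_1$, should stay large enough that the colour classes do not proliferate beyond $k$.

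The decisive and, I expect, genuinely delicate step is the strict lower bound $\chi^{i-max}(G-X_1)\ge k$, equivalently that the procedure on $G-X_1$ is forced to spend a full $k$ colours rather than the $k-1$ that an optimal colouring of $G-X_1$ might permit. The mechanism I would lean on is precisely the minimax selection rule: because $X_1$ is chosen to make $\alpha(G-X_1)$ as small as possible, the residual graph is ``independence-poor,'' so no later class can swallow a maximum-sized independent chunk and the procedure is compelled to fragment $V(G)-X_1$ into $k$ separate classes. The main obstacle is to turn this heuristic into a proof, that is, to show that under $\alpha(G)=\chi(G)$ the deletion of the most independence-destroying maximum independent set never collapses $G-X_1$ to a graph of much smaller independence or chromatic number; it is exactly such a collapse that would send the count back down to $k$, so the hypothesis $\alpha(G)=\chi(G)$ must be invoked in a sharp way to forbid it. I would therefore expect the heart of the argument to be a careful comparison of $\alpha(G-X_1)$ with $\alpha(G)$ under the equality $\alpha(G)=\chi(G)$, and I would treat that comparison — rather than either sandwich inequality in isolation — as the crux of the proof.
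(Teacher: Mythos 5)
Your proposal is a plan rather than a proof, and the step you yourself flag as the crux --- forcing $\chi^{i-max}(G-X_1)\ge k$ from the minimax choice of $X_1$ --- is not merely delicate but impossible: the claim it is meant to establish is false. Take $G=C_4$, which has $\alpha(C_4)=\chi(C_4)=2$. Every maximum independent set $X_1$ is a pair of opposite vertices, and $G-X_1$ is the remaining pair, a null graph; the procedure of Colouring description \ref{Col-3.1} therefore terminates after two iterations and $\chi^{i-max}(C_4)=2=\chi(C_4)$, not $\chi(C_4)+1$. This is consistent with the paper's own Proposition \ref{Prop-3.1}(d), which asserts $\chi^{i-max}(C_n)=\chi(C_n)$ for even cycles, so Theorem \ref{Thm-3.5} actually contradicts Proposition \ref{Prop-3.1}(d), and no argument along your lines (or any other) can close the gap without strengthening the hypothesis. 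The intuition you were trying to formalise --- that deleting the ``most independence-destroying'' maximum independent set leaves a residual graph still requiring $\chi(G)$ colours --- is exactly what fails here: for $C_4$ the deletion collapses the residual graph entirely.

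Two further points. First, your upper bound is also not closed: the inequality $n\le\alpha(G)\,\chi(G)=k^2$ bounds the order of $G-X_1$, not the number of iterations the maximax procedure performs on it, and that procedure can use more colours than $\chi(G-X_1)$ --- indeed the whole point of the $\min$ selection rule is that it may inflate the colour count, as it does on $P_4$. Second, the paper's own proof consists of the single sentence that the result is an immediate consequence of Colouring description \ref{Col-3.1}; your instinct that a genuine argument is required was sound, but the honest resolution is that the theorem needs an additional hypothesis (for instance, one guaranteeing $\chi(G-X_1)=\chi(G)$ for the selected $X_1$, as happens for set-graphs and complete thorn graphs) rather than a cleverer proof of the statement as written.
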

\begin{proof}
The result is an immediate consequence of Colouring description 3.1.
\end{proof}
\section{Conclusion}

In this paper, some interesting results for the rainbow neighbourhood number of set-graphs were presented. Set graphs were shown to be perfect. Conjecture \ref{Conj} remains open to be proved or disproved.

Furthermore, the paper clarified the intuitive colouring dilemma in respect of the rainbow neighbourhood convention. Colouring description \ref{Col-3.1} immediate lends the opportunity to research a new parameter denoted, $r^{i-max}_\chi(G)$. The aforesaid parameter is the number of vertices in a graph $G$ which yield rainbow neighbourhoods in $G$ if a minimum proper colouring in accordance with Colouring description \ref{Col-3.1} is applied.

An immediate consequence of Proposition \ref{Prop-3.1}, Corollary \ref{Cor-3.4} and Theorem \ref{Thm-3.5} is that for a sufficiently large $k \geq 1$, $k \in \N_0$, the bounds, $\chi(G) \leq \chi^{i-max}(G) \leq \chi(G) + k$ holds for any graph $G$. Also since $G$ is finite, such finite $k$ exists.We note that for all classes of graphs in Proposition \ref{Prop-3.1} the minimum $k$ is, $0$ or $1$. This observation motivates us in defining a new graph parameter as follows:

\begin{definition}{\rm 
The \textit{i-max number} of a graph $G$, denoted by $\alpha^{i-max}(G)$, which is defined to be $\alpha^{i-max}(G)=\min\{\ell:\chi^{i-max}(G)\leq \chi(G)+ k,\ k \in \N_0\}$. 
}\end{definition}

Finding an efficient algorithm to determine $\alpha^{i-max}(G)$ for classes of graphs remains open. Finding such algorithm will be a worthy challenge.

\end{document}